\numberwithin{equation}{section}
\newtheorem{theorem}{\bf Theorem}[section]
\newtheorem{definition}{\bf Definition}[section]
\newtheorem{lemma}{\bf Lemma}[section]
\def\udots{\mathinner{\mkern1mu\raise-1pt\vbox{\kern7pt\hbox{.}}\mkern2mu
    \raise2pt\hbox{.}\mkern2mu\raise5pt\hbox{.}\mkern1mu}}
\begin{document}
\begin{center}
{\Large \bf Elliptic Stochastic Partial Differential Equations with Two Reflecting Walls}
\end{center}

\begin{center}
Wen Yue, Tusheng Zhang
\end{center}
\begin{center}
{\scriptsize Department of Mathematics, University of Manchester, Oxford Road, Manchester M13 9PL, England, UK}
\end{center}

\begin{abstract}
In this article, we study elliptic stochastic partial differential equations with two reflecting walls $h^{1}$ and $h^{2}$, driven by multiplicative noise. The existence and uniqueness of the solutions are established.
\end{abstract}

{\it Keywords:}  elliptic stochastic partial differential equations; reflecting walls; elliptic deterministic obstacle problems;  random measures.

\noindent {\bf AMS Subject
Classification:} Primary 60H15 Secondary 60F10,  60F05.

\section{Introduction}

In this paper we will consider the following elliptic stochastic partial differential equations (SPDEs) with Dirichlet boundary condition on a bounded domain $D$ of $\mathbb{R}^{k}$, $k=1,2,3$.
\begin{equation}
-\Delta u(x)+f(x;u(x))=\eta(x)-\xi(x)+\sigma(x;u(x))\dot{W}(x),\ \ x\in D, \label{elliptic equation}
\end{equation}
where $\{\dot{W}(x),\ x\in D\}$ is a white noise in $D$. We are looking for a continuous random field ${u(x),\ x\in D}$ which is the solution of equation (\ref{elliptic equation}) satisfying $h^{1}(x)\leq u(x)\leq h^{2}(x)$, where $h^{1}$ and $h^{2}$ are given two walls. When $u(x)$ hits $h^{1}(x)$ or $h^{2}(x)$, the additional forces are added to prevent u from leaving $[h^{1},h^{2}]$. These forces are expressed by random measures $\xi$ and $\eta$ in equation(\ref{elliptic equation}) which play a similar role as the local time in the usual Skorokhod equation constructing Brownian motions with reflecting barriers. SPDEs with two reflecting walls can be used to model the evolution of random interfaces near two hard walls, see T. Funaki and S. Olla \cite{FO}. For nonlinear elliptic PDEs with measures as right side or boundary condition, we refer to Boccardo, Gallouet \cite{BG} and Rockner, Zegarlinski \cite{RZ}.\\

 For elliptic SPDEs without reflection, R. Buckdahn and E. Pardoux in \cite{BP} established the existence and uniqueness of the solutions of nonlinear elliptic stochastic partial differential equations  driven by additive noise. Based on this, elliptic SPDEs with reflection at zero driven by additive noise, have been studied by David Nualart and Samy Tindel in  \cite{NT}.\\

In our present paper, we will study the elliptic SPDEs with two reflecting walls driven by multiplicative noise. This is the first time to consider the case of  multiplicative noise. We will establish the existence and uniqueness of the solutions.
A similar problem for reflected  stochastic heat equations  has been studied by Nualart and Pardoux in \cite{NP}, Donati-Martin and Pardoux in \cite{DP}, Yang and Zhang in \cite{YZ1} and by Xu and  Zhang in \cite{XZ}. Our approaches were inspired by the ones in \cite{NP}, \cite{NT},  \cite{O} and \cite{XZ}.\\
\vskip 0.3cm
The rest of the paper is organized as follows. In Section 2, we lay down the framework of the paper. In Section 3, we study deterministic reflected elliptic PDEs and obtain some  a priori estimates. The main result is established in Section 4.

\section{Framework}

Let $D$ be an open bounded subset of $\mathbb{R}^{k}$, with $k\in \{1,2,3\}$. Consider a Gaussian family of random variables $\{ W=W(B),B\in \mathcal{B}(D)\}$, where $\mathcal{B}(D)$ is the Borel $\sigma$-field on $D$, defined in a complete probability space $(\Omega,\mathcal{F},P)$, such that $E(W(B))=0$ and
\begin{equation}
E(W(A)W(B))=|A\cap B|,
\end{equation}
where $|A\cap B|$ denotes the Lebesgue measure of the set $A\cap B$. We want to study a reflected nonlinear stochastic elliptic equation with Dirichlet condition driven by multiplicative noise:
\begin{eqnarray}
-\Delta u(x)+f(x,u(x))=\sigma(x;u(x)) \dot{W}(x), \label{elliptic equation without reflection}
\end{eqnarray}
where $x\in D$ while $h^{1}(x)\leq u(x)\leq h^{2}(x)$,
$\dot{W}(x)$ is the formal derivative of $W$ with respect to the Lebesgue measure and the symbol $\Delta$ denotes the Laplace operator on $L^{2}(D)$ . If $u(x)$ hits $h^{1}(x)$ or $h^{2}(x)$, additional forces are added in order to prevent $u$ from leaving $[h^{1},h^{2}]$. Such an effect will be expressed by adding extra(unknown) terms $\xi$ and $\eta$ in (\ref{elliptic equation without reflection}) which play a similar role as the local time in the usual Skorokhod equation constructing Brownian motions with reflecting boundaries.

$\mathcal{C}_{0}^{\infty}(D)$ denotes the set of infinitely differentiable functions on $D$ with compact supports. We will denote by $(\cdot,\cdot)$ the scalar product in $L^{2}(D)$, and by $||\cdot||_{\infty}$ the supremum norm on $D$. Let $f, \sigma:D\times \mathbb{R}\rightarrow \mathbb{R}$ be measurable functions. We will also denote by $f(u)$ the function $f(u)(x)=f(x,u(x))$,  $\sigma(u)$  the function $\sigma(u)(x)=\sigma(x,u(x))$.
We introduce the following hypotheses on f and $\sigma$:
\vskip 0.3cm
(F1) The function f is locally bounded, continuous and nondecreasing as a function of the second variable.
\vskip 0.3cm
($\Sigma$ 1) The function $\sigma$ is Lipschitz continuous:
\begin{eqnarray*}
|\sigma(x,z_{1})-\sigma(x,z_{2})|\leq C_{\sigma}|z_{1}-z_{2}|.
\end{eqnarray*}
\vskip 0.3cm
(H1) The walls $h^{i}(x), i=1,2$, are continuous functions satisfying $h^{1}(x)<h^{2}(x)$ for $ x\in D$ and
$h^{1}(x)\leq 0\leq h^{2}(x)$ for $ x\in \partial D$.
\vskip 0.3cm
The solution to Eq(\ref{elliptic equation}) will be a triplet $(u, \eta, \xi)$ such that $h^{1}(x)\leq u(x)\leq h^{2}(x)$ on $D$ which satisfies Eq(\ref{elliptic equation}) in the sense of distributions, and $\eta(dx)$, $\xi(dx)$ are random measures on $D$ which force the process $u$ to be in the interval $[h^{1},h^{2}]$. More precisely, a rigorous definition of the solution to Eq(\ref{elliptic equation}) is given as follows:

\begin{definition}
A triplet $(u,\eta,\xi)$ defined on a complete probability space $(\Omega, \mathcal{F},P)$ is a solution to the SPDE (\ref{elliptic equation}), denoted by $(0;f;\sigma;h^{1},h^{2})$, if\\
(i) $\{u(x),x\in D \}$ is a continuous random field on $D$ satisfying $h^{1}(x)\leq u(x)\leq h^{2}(x)$ and $ u_{|\partial D}=0$ a.s.\\
(ii) $\eta(dx)$ and $\xi(dx)$ are random measures on $D$ such that $\eta(K) <\infty $ and $\xi(K)<\infty$ for all compact subset $K\subset D$.\\
(iii) For all $\phi \in \mathcal{C}_{0}^{\infty}(D)$, we have
\begin{equation}
-(u,\Delta \phi)+(f(u),\phi)=\int_{D}\phi(x)\sigma(u)W(dx)+\int_{D}\phi(x)\eta({dx})-\int_{D}\phi(x)\xi(dx).\ \  P-a.s.
\end{equation}
(iv) $ \int_{D}(u(x)-h^{1}(x))\eta(dx)=\int_{D}(h^{2}(x)-u(x))\xi(dx)=0$.
\end{definition}

\section{Deterministic obstacle problem}
Let $h^{1}, h^{2}$ be as in Section 2, and $f$ satisfies $(F1)$. Let $v(x) \in C(D)$ with $v|_{\partial D}=0$. Consider a deterministic elliptic PDE with two reflecting walls:
\begin{equation}
  \left\{
   \begin{aligned}
-\Delta z+f(z+v)=\eta-\xi\\
h^{1}\leq z+v \leq h^{2}\\
z|_{\partial D}=0 .
    \end{aligned}
    \right. \label{deterministic eq1}
\end{equation}
Here is a precise definition of the solution of equation (\ref{deterministic eq1}).
\begin{definition}
A triplet $(z,\eta,\xi)$ is called a solution to the PDE (\ref{deterministic eq1}) if\\
(i) $z=z(x); x\in D$ is a continuous function satisfying $h^{1}(x)\leq z(x)+v(x) \leq h^{2}(x)$, $z|_{\partial D}=0$.\\
(ii) $\eta(dx)$ and $\xi(dx)$ are measures on $D$ such that $\eta(K) <\infty $ and $\xi(K)<\infty$ for all compact subset $K\subset D$.\\
(iii) For all $\phi \in \mathcal{C}_{0}^{\infty}(D)$ we have
\begin{equation}
-(z,\Delta \phi)+(f(z+v),\phi)=\int_{D}\phi(x)\eta({dx})-\int_{D}\phi(x)\xi(dx).
\end{equation}
(iv)$ \int_{D}(z(x)+v(x)-h^{1}(x))\eta(dx)=\int_{D}(h^{2}(x)-z(x)-v(x))\xi(dx)=0$.
\end{definition}

The following result is the existence and uniqueness of the solutions of the PDE with two reflecting walls (\ref{deterministic eq1}).

\begin{theorem}
Equation (\ref{deterministic eq1}) admits a unique solution ($z,\eta,\xi$).
\end{theorem}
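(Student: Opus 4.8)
The plan is to establish existence via a penalization/approximation scheme and uniqueness via a comparison argument exploiting the monotonicity of $f$. The obstacle problem (\ref{deterministic eq1}) is a deterministic variational inequality with two obstacles, so the natural strategy is to reduce it to a known single-obstacle or variational-inequality theory. First I would reformulate the problem by setting $w=z+v$, so that the equation becomes $-\Delta w + f(w) = \eta - \xi - \Delta v$ with the constraint $h^{1}\leq w\leq h^{2}$ and the complementarity conditions (iv); since $v$ is merely continuous, $\Delta v$ must be interpreted distributionally, and the pairing $-(z,\Delta\phi)$ in (iii) is precisely what keeps everything well-defined without requiring regularity of $v$. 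The role of $v$ is as a fixed continuous datum, so the core analytic content is the two-obstacle problem for the operator $-\Delta + f(\cdot)$.

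For existence I would use a double penalization: approximate the reflecting measures by the penalized terms $\frac{1}{\varepsilon}(w_{\varepsilon}-h^{1})^{-}$ (pushing up from the lower wall) and $\frac{1}{\varepsilon}(w_{\varepsilon}-h^{2})^{+}$ (pushing down from the upper wall), and solve the penalized semilinear PDE
\begin{equation}
-\Delta z_{\varepsilon}+f(z_{\varepsilon}+v)=\tfrac{1}{\varepsilon}(z_{\varepsilon}+v-h^{1})^{-}-\tfrac{1}{\varepsilon}(z_{\varepsilon}+v-h^{2})^{+},\qquad z_{\varepsilon}|_{\partial D}=0.
\end{equation}
Solvability of each penalized equation follows from monotone-operator theory, since $f$ is continuous and nondecreasing (F1) and the penalization terms are themselves monotone in $z_{\varepsilon}$. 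I would then define $\eta_{\varepsilon}(dx)=\frac{1}{\varepsilon}(z_{\varepsilon}+v-h^{1})^{-}\,dx$ and $\xi_{\varepsilon}(dx)=\frac{1}{\varepsilon}(z_{\varepsilon}+v-h^{2})^{+}\,dx$ and seek uniform a priori bounds: an $L^{\infty}$ bound on $z_{\varepsilon}$ coming from the fact that the walls $h^{1},h^{2}$ are bounded and satisfy (H1), and uniform total-mass bounds $\eta_{\varepsilon}(K),\xi_{\varepsilon}(K)$ on compact sets, obtained by testing against suitable cutoff functions. These are the a priori estimates the theorem's preamble refers to.

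The main obstacle will be passing to the limit $\varepsilon\to 0$ and showing that the limiting measures $\eta,\xi$ satisfy the complementarity condition (iv) and remain mutually "separated" (the upper measure $\xi$ should act only where $w=h^{2}$ and the lower measure $\eta$ only where $w=h^{1}$). Concretely, I would extract a subsequence along which $z_{\varepsilon}\to z$ uniformly (using equicontinuity from elliptic regularity on compact subsets) and $\eta_{\varepsilon}\to\eta$, $\xi_{\varepsilon}\to\xi$ weakly as measures. Uniform continuity of the limit together with the bound $h^{1}\leq z_{\varepsilon}+v\leq h^{2}+o(1)$ forces $h^{1}\leq z+v\leq h^{2}$ in the limit; the complementarity condition (iv) follows from the penalization structure, since $(z_{\varepsilon}+v-h^{1})\cdot\frac{1}{\varepsilon}(z_{\varepsilon}+v-h^{1})^{-}\leq 0$ and analogously for the upper wall, so the integrals $\int(z_{\varepsilon}+v-h^{1})\,\eta_{\varepsilon}(dx)$ vanish in the limit. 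Care is needed to show $\eta$ and $\xi$ have disjoint supports, which uses the strict inequality $h^{1}<h^{2}$ from (H1).

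For uniqueness, suppose $(z_{1},\eta_{1},\xi_{1})$ and $(z_{2},\eta_{2},\xi_{2})$ are two solutions with the same datum $v$. The standard device is to test the difference of the two weak formulations against a regularization of $z_{1}-z_{2}$ and use the monotonicity of $f$ together with the complementarity conditions. Writing $w_{i}=z_{i}+v$, the key algebraic inequality is that $\int_{D}(w_{1}-w_{2})\,d(\eta_{1}-\eta_{2})\leq 0$ and $\int_{D}(w_{1}-w_{2})\,d(\xi_{1}-\xi_{2})\geq 0$; these follow from (iv) by the same one-sided sign arguments used in the Skorokhod-type reflection problems, exploiting $h^{1}\leq w_{i}\leq h^{2}$. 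Combining these with $(f(w_{1})-f(w_{2}))(w_{1}-w_{2})\geq 0$ yields $\int_{D}|\nabla(z_{1}-z_{2})|^{2}\,dx\leq 0$, hence $z_{1}=z_{2}$, and then $\eta_{1}-\xi_{1}=\eta_{2}-\xi_{2}$ as distributions; the separation of supports then forces $\eta_{1}=\eta_{2}$ and $\xi_{1}=\xi_{2}$ individually. The technical subtlety here is justifying the integration-by-parts pairing $\int(w_{1}-w_{2})\,d(\eta_{1}-\eta_{2})$ when $z_{1}-z_{2}$ is only continuous and the measures are not absolutely continuous; I would handle this by a mollification argument, approximating $z_{1}-z_{2}$ by smooth test functions and using the uniform mass bounds on the measures to pass to the limit.
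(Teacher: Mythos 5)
The central gap is the compactness step in your existence argument. You penalize both walls simultaneously with a single parameter and claim to extract a uniformly convergent subsequence ``using equicontinuity from elliptic regularity on compact subsets.'' But the only uniform control available on the right-hand side of your penalized equation is a local mass bound: the penalization terms $\frac{1}{\varepsilon}(z_\varepsilon+v-h^1)^-$ and $\frac{1}{\varepsilon}(z_\varepsilon+v-h^2)^+$ are bounded uniformly only as measures, and elliptic regularity for $-\Delta z_\varepsilon=\mu_\varepsilon$ with measure data yields only $W^{1,q}_{\mathrm{loc}}$ bounds for $q<k/(k-1)$, which in dimensions $k=2,3$ (allowed here) gives neither continuity nor equicontinuity. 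Since $h^1,h^2,v$ are merely continuous, you also cannot invoke the classical Bensoussan--Lions penalization estimates (an $L^\infty$ bound on the penalization term), which require smoother obstacles. Worse, your symmetric one-parameter scheme destroys the monotonicity that could substitute for compactness: as $\varepsilon\downarrow 0$ the lower penalization pushes $z_\varepsilon$ up while the upper one pushes it down, so $z_\varepsilon$ is not monotone in $\varepsilon$ and no comparison argument applies. The paper avoids exactly this trap by an \emph{asymmetric} scheme: it solves the lower-barrier reflection problem exactly (via Nualart and Tindel \cite{NT}, Lemma 3.3, itself built on a separate $\delta$-penalization, cf.\ Lemma 3.4) while penalizing only the upper wall; the comparison Lemma 3.2 then shows $z^{\epsilon}$ is decreasing in $\epsilon$, so a pointwise limit exists for free, continuity of the limit follows by the arguments of \cite{NT}, and Dini's theorem upgrades this to uniform convergence on compacts. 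Note also a circularity lurking in your a priori mass bounds: to bound $\eta_\varepsilon(K)$ and $\xi_\varepsilon(K)$ \emph{separately} (rather than only their difference) by testing with cutoffs, you need the supports of the two penalization terms to stay uniformly separated on $K$; in the paper this separation is deduced from the uniform convergence of $z^{\epsilon}$ --- precisely the fact your scheme has not yet established.

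On uniqueness, your sign inequalities $\int_D(w_1-w_2)\,d(\eta_1-\eta_2)\leq 0$ and $\int_D(w_1-w_2)\,d(\xi_1-\xi_2)\geq 0$ are correct and are indeed the right ingredients, but the energy conclusion $\int_D|\nabla(z_1-z_2)|^2\,dx\leq 0$ presupposes $z_i\in H^1_{\mathrm{loc}}$, which is not known a priori: a solution is only continuous, and the equation with measure right-hand side again gives only $W^{1,q}_{\mathrm{loc}}$, $q<k/(k-1)$. Mollifying the test function does not by itself repair this, since the formal identity you are mollifying is the one that lacks justification. The paper sidesteps the difficulty by invoking the uniqueness argument of Theorem 2.2 in \cite{NT} to conclude $z=\bar z$, and then, exactly as you do, uses the disjointness of the supports of the upper and lower measures (from $h^1<h^2$) to identify $\eta$ and $\xi$ separately. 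So your uniqueness sketch has the right skeleton, but its analytic core needs the machinery of \cite{NT} rather than a mollification of the naive energy argument.
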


We first consider the problem of a single reflecting barrier, denoted by $(0;f;h^{1})$:
\begin{equation}\label{single barrier}
  \left\{
   \begin{aligned}
-\Delta z+f(z+v)=\eta(x)\\
 z+v \geq h^{1}\\
z|_{\partial D}=0 \\
\int_{D}(z+v-h^1)\eta(dx)=0,
    \end{aligned}
    \right.
\end{equation}
where the coefficient $f$ satisfies $(F1)$ and $h^1$ satisfies $(H1)$ in Section 2.\\

In the next lemma, we give the existence and uniqueness of the solution of $(0;f;h^1)$, and it follows from Theorem 2.2 in David Nualart and Samy Tindle \cite{NT} using similar methods.

\begin{lemma}
Let $v$ be a continuous function on $\bar{D}$ such that $v|_{\partial D} =0$. There exists a unique pair $(z,\eta)$ such that:\\
(i) $z$ is a continuous function on $\bar{D}$ such that $z|_{\partial D}=0$ and $z+v \geq h^{1} $.\\
(ii) $\eta$ is a measure on  on $D$ such that $\eta(K)< \infty$ for any compact set $K\subset D$.\\
(iii) For every $\phi \in \mathcal{C}_k^\infty(\mathcal{D})$, we have
\begin{eqnarray*}
-(z,\Delta \phi)+(f(z+v),\phi)=\int_{D}\phi(x)\eta(dx).
\end{eqnarray*}
(iv)$\int_{D}(z(x)+v(x)-h^1(x))\eta(dx)=0.$\\

\end{lemma}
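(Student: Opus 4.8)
The plan is to reduce the single-barrier problem $(0;f;h^{1})$ to the reflection-at-zero problem solved by Nualart and Tindel, and then to re-run their argument, checking the few places where the reduction changes the data. Set $\bar v = v - h^{1}$ and $\bar f(x,s) = f(x, s + h^{1}(x))$, and look for $\bar z = z$ together with $\bar\eta = \eta$. Then $\bar z + \bar v = z + v - h^{1}$, so the constraint $z+v\ge h^{1}$ becomes $\bar z + \bar v \ge 0$; moreover $\bar f(x,\bar z+\bar v) = f(x,z+v)$, so $-\Delta \bar z + \bar f(\bar z+\bar v) = \eta = \bar\eta$, and the complementarity condition (iv) turns into $\int_D(\bar z+\bar v)\bar\eta(dx)=0$. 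Since adding the fixed function $h^{1}(x)$ inside the argument preserves monotonicity and $h^{1}$ is continuous, $\bar f$ again satisfies (F1), and $\bar v$ is continuous on $\bar D$. Hence $(0;f;h^{1})$ is exactly the reflection-at-zero problem for the data $(\bar v,\bar f)$ treated in Theorem 2.2 of \cite{NT}, with one difference: because $h^{1}$ need not vanish on $\partial D$, we only have $\bar v|_{\partial D} = -h^{1}|_{\partial D}\ge 0$ rather than $\bar v|_{\partial D}=0$. This is still compatible with reflection at zero, since on $\partial D$ one has $\bar z+\bar v = \bar v \ge 0$.

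For existence I would follow the penalization scheme of \cite{NT}. For each $n\ge 1$ solve the semilinear elliptic PDE
\[
-\Delta \bar z_n + \bar f(\bar z_n+\bar v) = n\,(\bar z_n+\bar v)^{-},\qquad \bar z_n|_{\partial D}=0,
\]
where $a^{-}=\max(-a,0)$. The operator $\bar z\mapsto -\Delta\bar z + \bar f(\bar z+\bar v) - n(\bar z+\bar v)^{-}$ is monotone (the Laplacian and $\bar f$ are monotone, and $-n(\cdot+\bar v)^{-}$ is nondecreasing in $\bar z$), so each penalized equation has a unique solution; an a priori $L^\infty$ bound, obtained by comparison with the barrier and with the solution of the unconstrained equation, lets one invoke the merely local boundedness of $f$. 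The core of the argument is the set of uniform estimates: one shows that $\bar z_n$ is bounded and equicontinuous on compact subsets and that the penalization terms $\eta_n:=n(\bar z_n+\bar v)^{-}$ have uniformly bounded mass on compacts. Passing to a subsequence, $\bar z_n\to\bar z$ uniformly on compacts and $\eta_n\to\eta$ weakly as measures; the uniform bound on $\eta_n$ forces $(\bar z_n+\bar v)^{-}\to 0$, whence $\bar z+\bar v\ge 0$, and one passes to the limit in the weak formulation and in the complementarity relation.

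Uniqueness follows from monotonicity and does not need the reduction. Given two solutions $(z_1,\eta_1),(z_2,\eta_2)$, subtract the equations and test against $z_1-z_2$ (justified by approximating $z_1-z_2\in H^1_0(D)$ by smooth functions), giving
\[
\int_D|\nabla(z_1-z_2)|^2 + \int_D\bigl(f(z_1+v)-f(z_2+v)\bigr)(z_1-z_2)\,dx = \int_D (z_1-z_2)\,d(\eta_1-\eta_2).
\]
On $\mathrm{supp}\,\eta_1$ one has $z_1+v=h^{1}\le z_2+v$, so $z_1-z_2\le 0$ there, and symmetrically $z_1-z_2\ge 0$ on $\mathrm{supp}\,\eta_2$; hence the right-hand side is $\le 0$, while the left-hand side is $\ge 0$ by monotonicity of $f$. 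Both sides therefore vanish, forcing $z_1=z_2$ and then $\eta_1=\eta_2$ from the equation.

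I expect the main obstacle to be regularity: in dimensions $k=2,3$ the solution of $-\Delta\bar z=\mu$ for a measure $\mu$ need not be continuous, so the continuity of the limit $\bar z$ (equivalently of $z$) must be extracted from the obstacle structure together with the continuity of $h^{1}$ and $v$, exactly as in \cite{NT}. The second point to verify carefully is that the nonvanishing boundary value $\bar v|_{\partial D}\ge 0$ does not spoil the a priori estimates of \cite{NT}: since $\bar z_n$ still vanishes on $\partial D$, the boundary terms produced by integration by parts involve only $\bar v$ and are controlled using $\bar v|_{\partial D}\ge 0$, so their scheme carries over.
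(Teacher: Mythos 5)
Your existence argument is essentially the paper's own route: the paper disposes of this lemma in one line, saying it ``follows from Theorem 2.2 in \cite{NT} using similar methods,'' and your translation $\bar v=v-h^{1}$, $\bar f(x,s)=f(x,s+h^{1}(x))$ is a clean way to organize exactly that adaptation. You are also right about the one place the reduction fails to be literal: under (H1) one only gets $\bar v|_{\partial D}=-h^{1}|_{\partial D}\ge 0$, not $\bar v|_{\partial D}=0$, so Nualart--Tindel cannot simply be quoted and their penalization scheme must be re-run; this is what the paper implicitly does as well (its Lemmas 3.2--3.4 work directly with the penalized equations $-\Delta z^{\epsilon}+f(z^{\epsilon}+v)=\frac{1}{\epsilon}(z^{\epsilon}+v-h^{1})^{-}$, i.e.\ with the untranslated barrier). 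Up to minor imprecision (the convergence of $z^{\epsilon}$ is obtained by monotonicity/comparison plus Dini, not by subsequence extraction and equicontinuity), the existence half of your proposal is sound and matches the intended proof.

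The genuine gap is in your uniqueness argument: you test the difference of the equations against $z_{1}-z_{2}$ and justify this by asserting $z_{1}-z_{2}\in H^{1}_{0}(D)$. Solutions of this obstacle problem need not lie in $H^{1}_{0}(D)$, so the energy identity you write down is not available. Concretely, take $D=(0,1)$, $v=0$, $f=0$, $h^{1}(t)=t^{1/3}-t$ (continuous, concave, vanishing on $\partial D$, so (H1) holds). Then $z=h^{1}$ together with $\eta(dt)=-z''(t)\,dt=\tfrac{2}{9}t^{-5/3}dt$ satisfies (i)--(iv) of the lemma: $\eta$ is a positive measure, finite on compact subsets of $D$, and the complementarity condition holds trivially; yet $\int_{0}^{1}|z'(t)|^{2}dt=\infty$, so $z\notin H^{1}_{0}(D)$, and moreover $\eta(D)=\infty$, so even the pairing $\int_{D}(z_{1}-z_{2})\,d(\eta_{1}-\eta_{2})$ over all of $D$ is delicate. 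Thus for general solutions the left-hand side of your identity is simply undefined, and mollification cannot create the missing square-integrability. Note that the paper's own energy/testing arguments (Lemmas 3.2 and 3.4) are always performed at the level of the penalized PDEs, whose solutions are genuinely $H^{1}$; they are never applied to the limiting measure-valued problem. A repair consistent with the paper's toolkit avoids energy altogether: (a) any solution $(z,\eta)$ has $(z+v-h^{1})^{-}\equiv 0$ and $\eta\ge 0$, hence is a distributional supersolution of each penalized equation, and the comparison argument of Lemma 3.2 (maximum principle on the open set $\{z^{\epsilon}>z\}$, where both negative parts vanish and the $f$-difference has the right sign) gives $z\ge z^{\epsilon}$, so $z\ge\hat z:=\lim_{\epsilon\to 0}z^{\epsilon}$; (b) then $w=z-\hat z\ge 0$ vanishes on the contact set $\{z+v=h^{1}\}$, which carries $\eta$, and on the open complement of that set $-\Delta w=-\hat\eta-[f(z+v)-f(\hat z+v)]\le 0$, so $w$ is subharmonic there, continuous, and zero on the boundary of that set; the maximum principle forces $w\equiv 0$. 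Every solution therefore coincides with the penalized limit, and $\eta_{1}=\eta_{2}$ then follows from the distributional equation. Your sign observations on $\mathrm{supp}\,\eta_{1}$ and $\mathrm{supp}\,\eta_{2}$ are correct, but they must be fed into a maximum-principle argument of this kind, not into an energy identity that solutions do not support.
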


Theorem 2.2 from David Nualart and Samy Tindel:\\
Let $v$ be a continuous function on $\bar{D}$ such that $v|_{\partial D=0}$. There exist a unique pair $(z,\eta)$ such that:\\
(i) $z$ is a continuous function on $\bar{D}$ such that $z|_{\partial D}=0$ and $z \geq -v$.\\
(ii) $\eta$ is a measure on $D$ such that $\eta(K)< \infty$ for any compact set $K\subset D$.\\
(iii) For every $\phi \in \mathcal{C}_k^\infty(\mathcal{D})$, we have
\begin{eqnarray*}
-(z,\Delta \phi)+(f(z+v),\phi)=\int_{D}\phi(x)\eta(dx).
\end{eqnarray*}
(iv)$\int_{D}(z(x)+v(x))\eta(dx)=0.$\\

Next lemma is a comparison theorem for the PDE with reflection.

\begin{lemma}
(comparison)\\
Let $(z_{1},\eta_{1})$ and $(z_{2},\eta_{2})$ be solutions to single reflection problems $(0;f_{1},h_{1})$ and $(0;f_{2},h_{2})$ respectively as in (\ref{single barrier}). If $f_{1}\leq f_{2}$, and $h_{1}\geq h_{2}$, for every $x\in D$, then we have $z_{1}(x)\geq z_{2}(x)$.
\end{lemma}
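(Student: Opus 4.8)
The plan is to prove the equivalent statement $w^{+}\equiv 0$ for $w:=z_{2}-z_{1}$, where $w^{+}=(z_{2}-z_{1})^{+}$, which is precisely $z_{1}\ge z_{2}$. Subtracting the distributional identities (iii) satisfied by $(z_{1},\eta_{1})$ and $(z_{2},\eta_{2})$ shows that $w$ solves, in the sense of distributions,
\[
-\Delta w+\bigl(f_{2}(z_{2}+v)-f_{1}(z_{1}+v)\bigr)=\eta_{2}-\eta_{1}.
\]
I would then pair this identity with $w^{+}$. Since $z_{1}$ and $z_{2}$ both vanish on $\partial D$, so does $w$, hence $w^{+}$ vanishes on $\partial D$; integrating the Laplacian term by parts therefore produces the nonnegative energy $\int_{D}|\nabla w^{+}|^{2}\,dx$ with no boundary contribution, yielding
\[
\int_{D}|\nabla w^{+}|^{2}\,dx+\int_{D}\bigl(f_{2}(z_{2}+v)-f_{1}(z_{1}+v)\bigr)\,w^{+}\,dx=\int_{D}w^{+}\,\eta_{2}(dx)-\int_{D}w^{+}\,\eta_{1}(dx).
\]

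The core of the argument is the sign of the three remaining terms. For the right-hand side, $\eta_{1}$ being a nonnegative measure and $w^{+}\ge0$ give $\int_{D}w^{+}\,\eta_{1}(dx)\ge0$, while the complementarity condition $\int_{D}(z_{2}+v-h_{2})\,\eta_{2}(dx)=0$ forces $\eta_{2}$ to be carried by the contact set $\{z_{2}+v=h_{2}\}$; there $z_{1}+v\ge h_{1}\ge h_{2}=z_{2}+v$, so $w=z_{2}-z_{1}\le0$ and hence $w^{+}=0$ $\eta_{2}$-almost everywhere, i.e. $\int_{D}w^{+}\,\eta_{2}(dx)=0$. Thus the right-hand side is $\le0$. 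For the nonlinear term I would restrict to $\{w^{+}>0\}=\{z_{2}>z_{1}\}$ and write
\[
f_{2}(z_{2}+v)-f_{1}(z_{1}+v)=\bigl(f_{2}(z_{2}+v)-f_{1}(z_{2}+v)\bigr)+\bigl(f_{1}(z_{2}+v)-f_{1}(z_{1}+v)\bigr),
\]
in which the first bracket is $\ge0$ by $f_{1}\le f_{2}$ and the second is $\ge0$ because $f_{1}$ is nondecreasing in its second variable by (F1) and $z_{2}+v>z_{1}+v$ on that set; hence this integral is $\ge0$. Combining the three signs leaves $\int_{D}|\nabla w^{+}|^{2}\,dx\le0$, so $\nabla w^{+}=0$ a.e., and since $w^{+}$ vanishes on $\partial D$ it is identically zero, giving $z_{1}\ge z_{2}$.

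The step I expect to be the main obstacle is making the pairing with $w^{+}$ rigorous: $w^{+}$ is only Lipschitz, and, more seriously, when $k=2,3$ the presence of the measures $\eta_{i}$ on the right-hand side means $z_{1},z_{2}$ need not lie in $H_{0}^{1}(D)$, so neither the integration by parts nor the integrals $\int_{D}w^{+}\,\eta_{i}(dx)$ are justified a priori. I would resolve this by carrying out the comparison at the level of the penalized approximations used to construct the solutions in Lemma 3.2 (following \cite{NT}): for the smooth problems $-\Delta z_{i,n}+f_{i}(z_{i,n}+v)=n(h_{i}-z_{i,n}-v)^{+}$ one has $z_{i,n}\in H_{0}^{1}(D)$, the measure terms are replaced by bounded penalties, and the monotonicity of $r\mapsto r^{+}$ together with $h_{1}\ge h_{2}$ shows that on $\{z_{2,n}>z_{1,n}\}$ the penalty difference $n(h_{2}-z_{2,n}-v)^{+}-n(h_{1}-z_{1,n}-v)^{+}$ is $\le0$. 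The energy computation above then applies verbatim to give $z_{1,n}\ge z_{2,n}$, and passing to the limit $n\to\infty$, using $z_{i,n}\to z_{i}$, yields $z_{1}\ge z_{2}$.
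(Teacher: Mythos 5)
Your proposal is correct and, in its rigorous final form, is exactly the paper's proof: the paper likewise penalizes both single-obstacle problems, pairs the difference of the penalized equations with the positive part $\psi^{+}=(z_{2}^{\epsilon}-z_{1}^{\epsilon})^{+}$, uses $f_{1}\le f_{2}$, the monotonicity of $f$ in the second variable, and $h_{1}\ge h_{2}$ (via the monotonicity of $r\mapsto r^{-}$) to conclude $\|\nabla\psi^{+}\|_{L^{2}(D)}^{2}=0$, and then passes to the limit $\epsilon\to 0$ using the uniform convergence of the penalized solutions from \cite{NT}. Your preliminary computation at the level of the limit equations is the same energy argument, and you correctly diagnose why it is not rigorous as stated (the $z_{i}$ need not lie in $H_{0}^{1}(D)$ and the pairings with $\eta_{i}$ are unjustified), which is precisely why the paper works at the penalized level.
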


\begin{proof}
Let $z_{1}^{\epsilon}$ and $z_{2}^{\epsilon}$ be the solutions of the following PDEs:
\begin{equation}
  \left\{
   \begin{aligned}
-\Delta z_{1}^{\epsilon}(x)+f_{1}(z_{1}^{\epsilon}+v)(x)=\frac{1}{\epsilon}(z_{1}^{\epsilon}+v-h_1)^{-}(x) \\
z_{1}^{\epsilon}|_{\partial D}=0.
   \end{aligned}
  \right.  \label{1}
\end{equation}

\begin{equation}
  \left\{
   \begin{aligned}
-\Delta z_{2}^{\epsilon}(x)+f_{2}(z_{2}^{\epsilon}+v)(x)=\frac{1}{\epsilon}(z_{2}^{\epsilon}+v-h_2)^{-}(x) \\
z_{2}^{\epsilon}|_{\partial D}=0.
   \end{aligned}
  \right.  \label{2}
\end{equation}
According to \cite{NT}, $z_{1}^{\epsilon} \rightarrow z_{1}$ and $z_{2}^{\epsilon} \rightarrow z_{2}$ uniformly on $\bar{D}$ as $\epsilon \rightarrow 0$.\\
Let $\psi =z_{2}^{\epsilon}-z_{1}^{\epsilon}$, then
\begin{equation}
  \left\{
   \begin{aligned}
-\Delta \psi+f_{2}(z_{2}^{\epsilon}+v)-f_{1}(z_{1}^{\epsilon}+v)=\frac{1}{\epsilon}[(z_{2}^{\epsilon}+v-h_2)^{-}-
(z_{1}^{\epsilon}+v-h_1)^{-}] \\
\psi |_{\partial D}=0.
   \end{aligned}
  \right.  \label{3}
\end{equation}

Multiplying  (\ref{3}) by $\psi^{+}$, we obtain
\begin{eqnarray}
(-\Delta \psi, \psi^{+})+(f_{2}(z_{2}^{\epsilon}+v)-f_{1}(z_{1}^{\epsilon}+v),\psi^{+})
=\frac{1}{\epsilon}([(z_{2}^{\epsilon}+v-h_{2})^{-}-(z_{1}^{\epsilon}+v-h_{1})^{-}],\psi^{+}) \label{difference}
\end{eqnarray}

Note that,
\begin{eqnarray}
-(\Delta\psi,\psi^{+})=(\triangledown \psi,\triangledown \psi^{+})=(\triangledown \psi^{+},\triangledown \psi^{+})=||\triangledown \psi^{+}||^{2}_{L^{2}(D)} \geq 0. \label{left1}
\end{eqnarray}

If $\psi^{+}(x)\neq 0$, we have $z_{2}^{\epsilon}(x)>z_{1}^{\epsilon}(x)$.
Because $f_{2}$ is increasing and $f_{1}\leq f_{2}$, we also have
\begin{eqnarray}
(f_{2}(z_{2}^{\epsilon}+v)-f_{1}(z_{1}^{\epsilon}+v),\psi^{+})\geq 0. \label{left2}
\end{eqnarray}

Since $h_{1}\geq h_{2}$, we  have $z_{2}^{\epsilon}(x)+v(x)-h_{2}(x)\geq z_{1}^{\epsilon}(x)+v(x)-h^{1}(x)$ and then
\begin{eqnarray}
\frac{1}{\epsilon}([(z_{2}^{\epsilon}(x)+v(x)-h_{2}(x))^{-}-(z_{1}^{\epsilon}(x)+v(x)-h_{1}(x))^{-}],\psi^{+}(x))\leq 0.\label{right1}
\end{eqnarray}

Thus it follows from (\ref{difference}),(\ref{left1}),(\ref{left2}) and (\ref{right1}) that:
$$
||\triangledown \psi^{+}||^{2}_{L^{2}(D)} = 0.
$$
Hence, by the boundary condition $\psi^{+}|_{\partial D}=0$, we get $\psi^{+}=0$ and then $z_{2}^{\epsilon}\leq z_{1}^{\epsilon}$, for every $\epsilon >0$.
Hence, the lemma follows immediately by taking $\epsilon \rightarrow 0$. $\Box$
\end{proof}

\begin{lemma}
Let $v$ and $\hat{v}$ be given continuous functions and let $z^{\epsilon,\delta}$ be a unique solution to the following deterministic PDE:
\begin{equation}
  \left\{
   \begin{aligned}
-\Delta z^{\epsilon,\delta}(x)+f(z^{\epsilon,\delta}+v)(x)=\frac{1}{\delta}(z^{\epsilon,\delta}(x)+v(x)-h^{1}(x))^{-}-\frac{1}{\epsilon}(z^{\epsilon,\delta}(x)+v(x)-h^{2}(x))^{+},\\
z^{\epsilon,\delta}|_{\partial D}=0.
   \end{aligned}
  \right.
\end{equation}
We also denote by $\hat{z}^{\epsilon,\delta}$ the solution to the above PDE replacing $v$ by $\hat{v}$. Then we have, $||z^{\epsilon,\delta}-\hat{z}^{\epsilon,\delta}||_{\infty}\leq ||v-\hat{v}||_{\infty}$, where $||w||_{\infty}=\sup_{x\in D}|w(x)|$.
\end{lemma}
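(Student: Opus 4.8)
The plan is to subtract the two penalized equations, introduce $w:=z^{\epsilon,\delta}-\hat z^{\epsilon,\delta}$ and the constant $M:=\|v-\hat v\|_{\infty}$, and show that $(w-M)^{+}\equiv 0$ by testing the difference against $(w-M)^{+}$, exactly in the spirit of the comparison lemma proved above. A symmetric argument then gives $w\ge -M$, and the two bounds together yield $\|w\|_{\infty}\le M$.

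First I write the equations for $z^{\epsilon,\delta}$ (driven by $v$) and for $\hat z^{\epsilon,\delta}$ (driven by $\hat v$) and subtract them, so that $w$ satisfies, in the weak sense,
\[
-\Delta w+\big(f(z^{\epsilon,\delta}+v)-f(\hat z^{\epsilon,\delta}+\hat v)\big)
=\tfrac{1}{\delta}\big[(z^{\epsilon,\delta}+v-h^{1})^{-}-(\hat z^{\epsilon,\delta}+\hat v-h^{1})^{-}\big]
-\tfrac{1}{\epsilon}\big[(z^{\epsilon,\delta}+v-h^{2})^{+}-(\hat z^{\epsilon,\delta}+\hat v-h^{2})^{+}\big].
\]
Since $z^{\epsilon,\delta},\hat z^{\epsilon,\delta}\in H^{1}_{0}(D)\cap C(\bar D)$ with vanishing boundary values, the function $(w-M)^{+}$ lies in $H^{1}_{0}(D)$ (here $M\ge 0$) and is an admissible test function. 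Pairing the identity with $(w-M)^{+}$ and using $(-\Delta w,(w-M)^{+})=\|\triangledown(w-M)^{+}\|^{2}_{L^{2}(D)}$, just as in the comparison proof, gives
\[
\|\triangledown(w-M)^{+}\|^{2}_{L^{2}(D)}+\big(f(z^{\epsilon,\delta}+v)-f(\hat z^{\epsilon,\delta}+\hat v),(w-M)^{+}\big)
=\tfrac{1}{\delta}\big(\cdots,(w-M)^{+}\big)-\tfrac{1}{\epsilon}\big(\cdots,(w-M)^{+}\big).
\]

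The core of the argument is the sign analysis on the set $A:=\{x\in D:\,w(x)>M\}$, which is precisely the support of $(w-M)^{+}$. On $A$ we have $w>M\ge 0$, and since $M\ge|v-\hat v|$ pointwise, also $z^{\epsilon,\delta}-\hat z^{\epsilon,\delta}=w>M\ge \hat v-v$, whence the key inequality
\[
z^{\epsilon,\delta}+v>\hat z^{\epsilon,\delta}+\hat v\qquad\text{on }A.
\]
From this together with the monotonicity of $f$ in its second argument (hypothesis $(F1)$), the $f$-term is nonnegative. Using the key inequality again, together with the fact that $a\mapsto a^{-}$ is nonincreasing and $a\mapsto a^{+}$ is nondecreasing, the $\tfrac{1}{\delta}$-bracket is $\le 0$ and the $\tfrac{1}{\epsilon}$-bracket is $\ge 0$ on $A$; with their respective signs the entire right-hand side is $\le 0$. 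Since the left-hand side is a sum of two nonnegative terms, I conclude $\|\triangledown(w-M)^{+}\|^{2}_{L^{2}(D)}=0$. The boundary condition $w|_{\partial D}=0$ forces $(w-M)^{+}|_{\partial D}=0$, so the constant $(w-M)^{+}$ vanishes identically, i.e. $w\le M$ on $D$.

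Finally, exchanging the roles of $(z^{\epsilon,\delta},v)$ and $(\hat z^{\epsilon,\delta},\hat v)$ and repeating the same steps gives $\hat z^{\epsilon,\delta}-z^{\epsilon,\delta}\le M$, hence $w\ge -M$; combining the two bounds yields $\|z^{\epsilon,\delta}-\hat z^{\epsilon,\delta}\|_{\infty}\le M=\|v-\hat v\|_{\infty}$. The only step demanding care is the verification of the key inequality $z^{\epsilon,\delta}+v>\hat z^{\epsilon,\delta}+\hat v$ on the support of the test function, which is what pins down the sign of every penalization term; once the shift by exactly $M=\|v-\hat v\|_{\infty}$ is chosen, all three sign checks follow mechanically from monotonicity, and no constant beyond $1$ is lost.
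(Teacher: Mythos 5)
Your proof is correct and is essentially the paper's own argument: your test function $(w-M)^{+}$ with $w=z^{\epsilon,\delta}-\hat z^{\epsilon,\delta}$ is exactly the paper's $w^{+}$ after its shift $w=z^{\epsilon,\delta}-\hat z^{\epsilon,\delta}-l$, and the sign analysis (monotonicity of $f$, $a\mapsto a^{-}$ nonincreasing, $a\mapsto a^{+}$ nondecreasing on the set where the test function is positive) matches the paper's treatment via $F_{\epsilon,\delta}$ term by term. The conclusion from $\|\triangledown(w-M)^{+}\|_{L^{2}(D)}^{2}=0$ plus the zero boundary value, followed by interchanging the roles of the two solutions, is also identical to the paper.
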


\begin{proof}
Define $w(x)=z^{\epsilon,\delta}(x)-\hat{z}^{\epsilon,\delta}(x)-l$, where $l=||v-\hat{v}||_{\infty}$.\\
Then, $w$ satisfies the following PDE:
\begin{eqnarray}
    -\Delta w+f(z^{\epsilon,\delta}+v)-f(\hat{z}^{\epsilon,\delta}+\hat{v}) \nonumber
&=&\frac{1}{\delta}[(z^{\epsilon,\delta}+v-h^{1})^{-}-(\hat{z}^{\epsilon,\delta}+\hat{v}-h^{1})^{-}]\\
  &&-\frac{1}{\epsilon}[(z^{\epsilon,\delta}+v-h^{2})^{+}-(\hat{z}^{\epsilon,\delta}+\hat{v}-h^{2})^{+}] \label{1 in lemma 2}
\end{eqnarray}
Set
$$F_{\epsilon,\delta}(u)=f(u)-\frac{1}{\delta}(u-h^{1})^{-}+\frac{1}{\epsilon}(u-h^{2})^{+}$$
Now we note that, if $w^+(x)>0$, we have  $z^{\epsilon,\delta}(x)+v(x) > \hat{z}^{\epsilon,\delta}(x)+\hat{v}(x)$ and hence
\begin{eqnarray}
  \left\{
   \begin{aligned}
f(z^{\epsilon,\delta}+v)(x)\geq f(\hat{z}^{\epsilon,\delta}+\hat{v})(x)\\
\frac{1}{\delta}[(z^{\epsilon,\delta}+v-h^{1})^{-}(x)-(\hat{z}^{\epsilon,\delta}+\hat{v}-h^{1})^{-}(x)] \leq 0\\
\frac{1}{\epsilon}[(z^{\epsilon,\delta}+v-h^{2})^{+}(x)-(\hat{z}^{\epsilon,\delta}+\hat{v}-h^{2})^{+}(x)] \geq 0,
\label{elements in difference}
\end{aligned}
  \right.
\end{eqnarray}
Consequently, on the set $\{x\in D; w^+(x)>0\}$, we have
\begin{equation}\label{001}
F_{\epsilon,\delta}(z^{\epsilon,\delta}+v)(x)-F_{\epsilon,\delta}(\hat{z}^{\epsilon,\delta}+\hat{v})(x)\geq 0
\end{equation}
On the other hand,
multiplying (\ref{1 in lemma 2}) by $w^{+}$, we obtain:
\begin{eqnarray}
 -(\Delta w,w^{+})+(F_{\epsilon,\delta}(z^{\epsilon,\delta}+v)-F_{\epsilon,\delta}(\hat{z}^{\epsilon,\delta}+\hat{v}),w^{+})\nonumber &=&0 \label{difference's inner product}
\end{eqnarray}
Because $$-(\Delta w,w^{+})=||\triangledown w^{+}||_{L^{2}(D)}^{2}\geq 0,$$
it follows from (\ref{difference's inner product}) that
$$||\triangledown w^{+}||_{L^{2}(D)}^{2}=0$$
and
$$(F_{\epsilon,\delta}(z^{\epsilon,\delta}+v)-F_{\epsilon,\delta}(\hat{z}^{\epsilon,\delta}+\hat{v}),w^{+})\nonumber =0$$
Taking into account the fact $w^+=0$ on $\partial D$, we deduce  $w^{+}=0$. Hence $z^{\epsilon,\delta}-\hat{z}^{\epsilon,\delta}\leq l$.
Interchanging the role of $z^{\epsilon,\delta}$ and $\hat{z}^{\epsilon,\delta}$, we prove the lemma.
 $\Box$
\end{proof}\\

The next lemma is a straight consequence of the above lemma.

\begin{lemma}
Let $v$ and $\hat{v}$ be given continuous functions and let $(z^{\epsilon},\eta^{\epsilon})$ and $(\hat{z}^{\epsilon},\hat{\eta}^{\epsilon})$ be the unique solutions to single reflection problems $(0; f+\frac{(\cdot+v-h^{2})^{+}}{\epsilon};h^{1})$ and $(0; f+\frac{(\cdot+\hat{v}-h^{2})^{+}}{\epsilon};h^{1})$, respectively. Then we have $||z^{\epsilon}-\hat{z}^{\epsilon}||_{\infty}\leq ||v-\hat{v}||_{\infty}$.
\end{lemma}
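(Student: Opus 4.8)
The plan is to recover $z^{\epsilon}$ and $\hat{z}^{\epsilon}$ as the $\delta\to 0$ limits of the doubly penalized solutions $z^{\epsilon,\delta}$ and $\hat{z}^{\epsilon,\delta}$ of the preceding lemma, and then simply pass to the limit in the estimate already established there. First I would check that the two single reflection problems in the statement fall under the existence theory of the single-barrier lemma: their common nonlinearity has the form $g(x,u)=f(x,u)+\frac{1}{\epsilon}(u-h^{2}(x))^{+}$, and since, for fixed $\epsilon$, the map $u\mapsto(u-h^{2}(x))^{+}$ is continuous, nondecreasing and locally bounded (with $h^{2}$ bounded on $\bar{D}$), $g$ inherits hypothesis $(F1)$ from $f$. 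Hence the single-barrier existence lemma applies and $(z^{\epsilon},\eta^{\epsilon})$ and $(\hat{z}^{\epsilon},\hat{\eta}^{\epsilon})$ are indeed well defined.

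Next I would introduce, for each $\delta>0$, the penalization of the single reflection problem $(0;g;h^{1})$, namely the solution $w$ of $-\Delta w+g(w+v)=\frac{1}{\delta}(w+v-h^{1})^{-}$ with $w|_{\partial D}=0$. By the penalization result of \cite{NT} (the same uniform convergence already invoked in the comparison lemma), this $w$ converges uniformly on $\bar{D}$ to $z^{\epsilon}$ as $\delta\to 0$. The key observation is that, after moving the term $\frac{1}{\epsilon}(w+v-h^{2})^{+}$ contained in $g(w+v)$ to the right-hand side, this penalized equation is \emph{exactly} the PDE of the preceding lemma; consequently its solution coincides with the $z^{\epsilon,\delta}$ studied there, so that $z^{\epsilon,\delta}\to z^{\epsilon}$ uniformly, and by the same reasoning $\hat{z}^{\epsilon,\delta}\to\hat{z}^{\epsilon}$ uniformly.

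Finally, the preceding lemma furnishes $||z^{\epsilon,\delta}-\hat{z}^{\epsilon,\delta}||_{\infty}\leq||v-\hat{v}||_{\infty}$ for every $\delta>0$; letting $\delta\to 0$ and using the uniform convergence of both families of penalized solutions yields $||z^{\epsilon}-\hat{z}^{\epsilon}||_{\infty}\leq||v-\hat{v}||_{\infty}$. I expect the only real obstacle to be this identification step, together with the justification that $z^{\epsilon,\delta}\to z^{\epsilon}$ uniformly: one must confirm that the modified nonlinearity $g$ meets $(F1)$ so that the single-barrier penalization theory of \cite{NT} applies verbatim, and that the penalized equation for $(0;g;h^{1})$ really is the equation of the previous lemma. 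Once these are in place, the passage to the limit in the $\delta$-uniform bound is entirely routine.
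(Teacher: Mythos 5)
Your proposal is correct and is precisely the argument the paper intends: the paper gives no proof beyond calling the lemma ``a straight consequence of the above lemma,'' and you supply exactly the omitted identification — the doubly penalized PDE of the preceding lemma is the $\delta$-penalization of the single reflection problem $(0;\, f+\frac{(\cdot+v-h^{2})^{+}}{\epsilon};\,h^{1})$, so the construction in \cite{NT} gives $z^{\epsilon,\delta}\to z^{\epsilon}$ and $\hat{z}^{\epsilon,\delta}\to\hat{z}^{\epsilon}$ uniformly, and the bound $\|z^{\epsilon,\delta}-\hat{z}^{\epsilon,\delta}\|_{\infty}\leq\|v-\hat{v}\|_{\infty}$ passes to the limit. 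Your preliminary check that the augmented nonlinearity still satisfies (F1), and your appeal to uniqueness to identify the penalization limit with the solution named in the statement, are the right (and only) points needing verification.
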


\noindent Proof of Theorem 3.1:\\
Denote by $z^{\epsilon}$ the solution of the following single barrier problem:
\begin{equation}
  \left\{
   \begin{aligned}
-\Delta z^{\epsilon}+f(z^{\epsilon}+v)+\frac{1}{\epsilon}(z^{\epsilon}+v-h^{2})^{+}=\eta^{\epsilon}\\
z^{\epsilon}+v\geq h^{1}\\
\int_{D}(z^{\epsilon}+v-h^{1})\eta^{\epsilon}(dx)=0,
\end{aligned}
  \right.
\end{equation}
By the construction in  \cite{NT}, it is known that $\eta^{\epsilon}(dx)=\lim_{\delta \rightarrow 0}\frac{(z^{\epsilon,\delta}+v-h^{1})^{-}}{\delta}(dx)$ and it means that the measure $\frac{(z^{\epsilon,\delta}+v-h^{1})^{-}}{\delta}(dx)$ converges to $\eta^\epsilon(dx)$ in the sense of distribution on $D$.
According to lemma 3.2(comparison):
$z^{\epsilon}(x)$ is decreasing  as $\epsilon \downarrow 0$. Since $z^{\epsilon}(x)\geq h^{1}(x)-v(x)$,
$z^{\epsilon}(x)$ converge to some function $z(x$) as $\epsilon \rightarrow 0$. Using similar arguments as in the proof of Lemma 3.2 in \cite{NT}, we can show that the function $z(x)$ is also continuous.
\\
Next we prove that $z(x)$ is a solution of the reflected PDE with two reflected walls
  \begin{eqnarray}
    \left\{
   \begin{aligned}
 -\Delta z+f(z+v)=\eta -\xi\\
h^{1} \leq z+v\leq h^{2}\\
\int_{D}(z+v-h^{1})\eta(dx)=\int_{D}(h^2-z-v)\xi(dx)=0.
 \end{aligned}
  \right.
 \end{eqnarray}
Step1:\\
Now for $\psi \in C_{0}^{\infty}(D)$, $z^{\epsilon}$ satisfies the following integral equation:
\begin{equation}
-(\Delta z^{\epsilon},\psi)+(f(z^{\epsilon}+v),\psi)+(\frac{1}{\epsilon}(z^{\epsilon}+v-h^{2})^{+},\psi)=\int \psi(x)\eta^{\epsilon}(dx)  \label{z's sequence}
\end{equation}
i.e.
\begin{equation}\label{002}
-(z^{\epsilon},\Delta\psi)+(f(z^{\epsilon}+v),\psi)=\int \psi(x) (\eta^{\epsilon}-\xi^{\epsilon})(dx),
\end{equation}
where $\xi^{\epsilon}=\frac{(z^{\epsilon}+v-h^{2})^{+}}{\epsilon}$.
The limit of the left hand side  of (\ref{002}) exists as $\epsilon \rightarrow 0$. Therefore $\lim_{\epsilon \rightarrow 0}(\eta^{\epsilon}-\xi^{\epsilon})$ exists in the space of distributions,
i.e.
\begin{equation}
-(z,\Delta \psi)+(f(z+v),\psi)=\lim_{\epsilon\rightarrow 0}(\eta^{\epsilon}-\xi^{\epsilon},\psi) \label{limit function}
\end{equation}
Next we want to show that both $\lim_{\epsilon \rightarrow 0}\eta^{\epsilon}$ and $\lim_{\epsilon \rightarrow 0}\xi^{\epsilon}$ exist.
By Dini theorem, we know that $z^{\epsilon}(x) \rightarrow z(x)$ uniformly on compact subsets of $D$. For $\phi(x)\in C_{0}^{\infty}(D)$, denote by $K=supp(\phi)$, the compact support of $\phi$.
As $h^{1}(x)<h^{2}(x)$ in $D$, there exists $\theta_{K} >0$ such that $h^{2}(x)-h^{1}(x)\geq \theta_{K}$ on $K$ .On the other hand, there exists $\epsilon_{0}>0$, such that for $\epsilon<\epsilon_{0}$,
$|z^{\epsilon}(x)-z(x)|<\frac{\theta_{K}}{4}$ on $K$.
Let $\theta_{K}$ be chosen as above. Since
$$ supp \eta^{\epsilon} \subseteq \{x:z^{\epsilon}(x)+v(x)=h^{1}(x)\},
$$
and
$$supp \xi^{\epsilon}=\{x:z^{\epsilon}(x)+v(x)\geq h^{2}(x)\},
$$
we have  for $\epsilon \leq \epsilon_{0}$,
$$supp\eta^{\epsilon}\cap K\subseteq \{x:z(x)-\frac{\theta_{K}}{4}+v(x)\leq h^{1}(x)\}\cap K:=A_{K},
$$
and
$$supp \xi^{\epsilon}\cap K\subseteq \{x: z(x)+\frac{\theta_{K}}{4}+v(x) \geq h^{2}(x)\}\cap K:=B_{K},
$$
for $\epsilon <\epsilon_{0}.$\\
By the choice of $\theta_{K}$, we see that $A_{K}\cap B_{K}=\varnothing$. Thus, we can find $\tilde{\phi}(x)\in C_{0}^{\infty}(D)$ such that $\tilde{\phi}=\phi$ on $A_{K}$, $supp \tilde{\phi}\cap B_{K}=\varnothing$ and $supp \tilde{\phi}\cap supp \xi^{\epsilon}=\varnothing$ for $\epsilon<\epsilon_{0}$.
Hence,
$\lim_{\epsilon \rightarrow 0}(\eta^{\epsilon},\phi)=\lim_{\epsilon\rightarrow 0}(\eta^{\epsilon},\tilde{\phi})=\lim_{\epsilon\rightarrow 0}(\eta^{\epsilon}-\xi^{\epsilon},\tilde{\phi})
$
exists.
Therefore $\eta^{\epsilon}\rightarrow \eta$ in the space of distributions. Similarly, $\xi^{\epsilon}\rightarrow \xi$.
Let $\epsilon \rightarrow 0$ in equation (\ref{limit function}) to see that $(z,\eta,\xi)$ satisfies the following equation:
\begin{equation}
-(\Delta z,\psi)+(f(z+v),\psi)=\int_{D}\psi(x)(\eta-\xi)(dx).
\end{equation}
Step 2:\\
Multiplying (\ref{z's sequence}) by $\epsilon$ and letting $\epsilon \rightarrow 0$, we get
$$
0=((z+v-h^{2})^{+},\psi).
$$
This implies $z+v-h^{2}\leq 0$, i.e. $ z+v\leq h^{2}$.
Since $h^{1}\leq z^{\epsilon}+v$, we see that $h^{1}\leq z+v$.
So $h^{1}\leq z+v\leq h^{2}$.\\
Step 3:\\
Now let us show that $$ \int_{D}(z+v-h^{1})\eta(dx)=0$$ and $$\int_{D}(z+v-h^{2})\xi(dx)=0.$$

By the definition of $\xi^{\epsilon}=\frac{(z^{\epsilon}+v-h^{2})^{+}}{\epsilon},$
$\int_{D}(z^{\epsilon}+v-h^{2})\xi^{\epsilon}(dx)\geq 0$,
and the uniform convergence of $z^{\epsilon}$ on compact subsets, letting $\epsilon \rightarrow 0,$ we have $\int_{D}(z+v-h^{2})\xi(dx)\geq 0.$
Hence we must have $\int_{D}(z+v-h^{2})\xi(dx)=0.$
From the single reflecting barrier problem $(0;-\frac{(\cdot+v-h^{2})^{+}}{\epsilon};h^{1}),$ we know $\int_{D}(z^{\epsilon}+v-h^{1}) \eta^{\epsilon}(dx)=0.$ Then letting $\epsilon \downarrow 0$, we get $\int_{D}(z+v-h^{1})\eta(dx)=0.$\\
Step 4:\\
For any compact set $K\subset D$, since
$$-(\Delta z,\psi)+(f(z+v),\psi)=\int_{D}\psi(x)\eta(dx)-\int_{D}\psi(x)\xi(dx).
$$
Choose a non-negative function $\psi \in C_{0}^{\infty}(D)$ such that $\psi(x)=1$
 on $supp(\eta)\cap K$ and $\psi(x)=0$ on $supp(\xi)\cap K$,
 $$-(\Delta z,\psi)+(f(z+v),\psi)=\int_{K}\eta(dx)-0,
 $$
 So we get $\eta(K)< \infty$. Similarly, $\xi(K)<\infty$.\\

 Uniqueness:
 Let $(z,\eta,\xi)$ and $(\bar{z},\bar{\eta},\bar{\xi})$ be solutions to a double reflection problem $(0;f;h^{1},h^{2})$. We set $\Psi=z-\bar{z}$. For any $\psi \in C_{0}^{\infty}(D)$, we have
 \begin{eqnarray} \nonumber
 &&-\int_{D}\Psi(x)\Delta \psi(x) dx+\int_{D}[f(z+v)-f(\bar{z}+v)] \psi(x) dx\\ \label{z minus z bar}
   &=&\int_{D}\psi(x)\eta(dx)-\int_{D}\psi(x)\xi(dx)-\int_{D}\psi(x)\bar{\eta}(dx)+\int_{D}\psi(x)\bar{\xi}(dx). \label{distributional}
 \end{eqnarray}
 From here, following the same arguments as that in the proof of Theorem 2.2 in \cite{NT}, we can show that $z=\bar{z}$.

 Recall that
 \begin{eqnarray*}
 supp \eta, supp \bar{\eta} \subset \{x\in D:z+v=h^{1}\}=:A,\\
 supp \xi, supp \bar{\xi} \subset \{x\in D:z+v=h^{2}\}=:B.
 \end{eqnarray*}
 Because $A\cap B=\varnothing$, for any $\psi \in C_{0}^{\infty}(D)$ with $supp \psi \subset supp \eta \cup supp \bar{\eta}$, it holds that $supp \psi \cap supp \xi=\varnothing$ and $supp \psi \cap supp \bar{\xi}=\varnothing$. Applying equation (\ref{distributional}) to such a function $\psi$, we deduce that $\eta=\bar{\eta}$. Similarly $\xi=\bar{\xi}$. Then the uniqueness is proved. $\Box$

\section{Reflected SPDEs}
Recall
\begin{equation}
\left\{
\begin{aligned}
-\Delta u(x)+f(x,u(x))=\sigma(x,u(x))\dot{W}(x)+\eta(x)-\xi(x)\\
u|_{\partial D}=0\\
h^1(x)\leq u(x)\leq h^2(x)\\
\int_{D}(u(x)-h^1(x))\eta(dx)=\int_{D}(h^2(x)-u(x))\xi(dx)=0.
\end{aligned}
\right. \label{u equation}
\end{equation}

Let $G_{D}(x,y)$ be the Green function on $D$ associated to the Laplacian operator with Dirichlet boundary conditions.
We recall from \cite{BP} (or \cite{ST}) that if $k=2$ or $3$,
\begin{eqnarray*}
G_{D}(x,y)=G(x,y)-E_{x}(G(B_{\tau},y)), x,y \in D
\end{eqnarray*}
with 
\begin{eqnarray*}
G(x,y)=\frac{1}{2\pi} log|x-y|, \quad \mbox{if} \quad k=2;\\
G(x,y)=-\frac{1}{4\pi}|x-y|^{-1}, \quad \mbox{if} \quad k=3;
\end{eqnarray*}
and $B_{\tau}$ is the random variable obtained by stopping a k-dimensional Brownian motion starting at $x$ at its first exit time of $D$. For $k=1$,if $D=(0,1)$, then $G_{D}(x,y)=(x\wedge y)-xy$.\\
 
The main result of this paper is the following theorem.
\begin{theorem}
Assume that (F1), (H1) and ($\Sigma$1)  with $C_{\sigma}$ satisfying
$\exists p >1$,
\begin{equation}\label{006}
[2^{2p-1}ac_{p}Br_D^{\lambda p -k}+2^{2p-1}c_{p}(C_D)^{\frac{p}{2}}]C_{\sigma}^{p}<1,
\end{equation}
where $c_p$ and $a$ are universal constants appeared in the Burkholder's inequality, Komogorov's inequality, $r_D$
is the diameter of the domain $D$ (see(\ref{005}), (\ref{iteration of u})), $C_D=\sup_{x}\int_{D}|G_{D}(x,y)|^{2}dy$. And $B$ is the constant appeared in the estimate of the Green function $G_D$ in (4.13). 
And $\lambda$ is any number in $(0,1]$ when the dimension $k=1$; $\lambda$ is any number in $(0,1)$ when the dimension $k=2$; $\lambda$ is any number in $(0,\frac{1}{2})$ when the dimension $k=3$.\\
Then there exists a unique solution $(u,\eta,\xi)$ to the reflected SPDE Eq(\ref{elliptic equation}). Moreover, $E(||u||_{\infty})^{p}< \infty$.
\end{theorem}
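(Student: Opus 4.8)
The plan is to construct the solution by a Banach fixed-point argument in the complete metric space $\mathcal{H}_p$ consisting of continuous random fields $u$ on $\bar D$ with $u|_{\partial D}=0$ and $E||u||_{\infty}^{p}<\infty$, equipped with the distance $d(u,\hat u)=(E||u-\hat u||_{\infty}^{p})^{1/p}$. The idea is to split a prospective solution into a stochastic part and a reflected deterministic part. Given $u\in\mathcal{H}_p$, define the stochastic convolution
$$v_u(x)=\int_D G_D(x,y)\,\sigma(y,u(y))\,W(dy),$$
which is the weak solution of $-\Delta v_u=\sigma(u)\dot W$ with zero Dirichlet data. Using Burkholder's inequality together with the bound $C_D=\sup_x\int_D|G_D(x,y)|^2\,dy<\infty$ one obtains a pointwise $p$-th moment estimate for $v_u(x)$ through $C_D$, while the Green-function increment bound of the statement, combined with Kolmogorov's continuity criterion, yields a H\"older-modulus estimate with exponent $\lambda$ in the stated ranges; combining the two shows that $v_u$ admits a continuous version with $E||v_u||_{\infty}^{p}<\infty$.

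For fixed $\omega$ the path $v_u(\cdot,\omega)$ is continuous and vanishes on $\partial D$, so Theorem 3.1 applies and produces a unique triplet $(z_u,\eta_u,\xi_u)$ solving the deterministic two-obstacle problem (\ref{deterministic eq1}) with $v=v_u$. I then set $\Phi(u):=z_u+v_u$. By construction $\Phi(u)$ is continuous, satisfies $h^1\le\Phi(u)\le h^2$, and solves, in the distributional sense, $-\Delta\Phi(u)+f(\Phi(u))=\sigma(u)\dot W+\eta_u-\xi_u$; hence a fixed point $u=\Phi(u)$ is exactly a solution of the reflected SPDE (\ref{u equation}), with associated measures $(\eta_u,\xi_u)$. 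Joint measurability in $\omega$ of $z_u$ is inherited from the approximation scheme of Theorem 3.1, where $z_u=\lim_{\epsilon\downarrow0}\lim_{\delta\downarrow0}z^{\epsilon,\delta}$ and each $z^{\epsilon,\delta}$ depends measurably on $v_u$; since $\Phi(u)$ lies between the walls, it is bounded and $\Phi$ maps $\mathcal{H}_p$ into itself.

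The core of the argument is to show that $\Phi$ is a contraction. For $u,\hat u\in\mathcal{H}_p$, hypothesis $(\Sigma1)$ gives $|\sigma(y,u(y))-\sigma(y,\hat u(y))|\le C_\sigma|u(y)-\hat u(y)|$, so the same moment-plus-Kolmogorov analysis applied to $v_u-v_{\hat u}=\int_D G_D(\cdot,y)\,(\sigma(u)-\sigma(\hat u))(y)\,W(dy)$ yields
$$E||v_u-v_{\hat u}||_{\infty}^{p}\le\big[2^{p-1}a c_p B r_D^{\lambda p-k}+2^{p-1}c_p(C_D)^{p/2}\big]C_\sigma^{p}\,E||u-\hat u||_{\infty}^{p},$$
the two terms coming respectively from the oscillation estimate (Kolmogorov, with universal constant $a$, diameter $r_D$, and Green increment constant $B$) and the pointwise moment (through $C_D$). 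On the deterministic side, passing to the limit $\delta\downarrow0$ and then $\epsilon\downarrow0$ in Lemma 3.3 (equivalently Lemma 3.4) shows that the map $v\mapsto z$ is non-expansive, $||z_u-z_{\hat u}||_{\infty}\le||v_u-v_{\hat u}||_{\infty}$. Therefore
$$||\Phi(u)-\Phi(\hat u)||_{\infty}\le||z_u-z_{\hat u}||_{\infty}+||v_u-v_{\hat u}||_{\infty}\le 2\,||v_u-v_{\hat u}||_{\infty},$$
and raising to the $p$-th power and taking expectations gives $d(\Phi(u),\Phi(\hat u))^{p}\le\kappa\, d(u,\hat u)^{p}$ with $\kappa=\big[2^{2p-1}a c_p B r_D^{\lambda p-k}+2^{2p-1}c_p(C_D)^{p/2}\big]C_\sigma^{p}$, which is strictly less than $1$ precisely by hypothesis (\ref{006}).

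The Banach fixed-point theorem then provides a unique $u\in\mathcal{H}_p$ with $u=\Phi(u)$, which in particular satisfies $E||u||_{\infty}^{p}<\infty$; setting $\eta=\eta_u$ and $\xi=\xi_u$ gives the desired solution $(u,\eta,\xi)$. For uniqueness among all solutions, observe that if $(u,\eta,\xi)$ solves (\ref{u equation}) then $v_u$ is determined by $u$, and $(\eta,\xi)$ must coincide with the unique deterministic triplet associated to $v=v_u$ by the uniqueness part of Theorem 3.1; consequently $u=\Phi(u)$, so $u$ is the unique fixed point and $(\eta,\xi)=(\eta_u,\xi_u)$. The main obstacle is the supremum-norm $L^p$ estimate on the stochastic convolution: controlling $E||v_u-v_{\hat u}||_{\infty}^{p}$ requires \emph{simultaneously} a pointwise Burkholder bound through $C_D$ and a H\"older-continuity bound on the Green-weighted stochastic integral through Kolmogorov's theorem and the Green increment estimate, and it is the careful bookkeeping of these universal constants that produces the exact contraction threshold (\ref{006}).
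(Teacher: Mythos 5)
Your proposal is correct and takes essentially the same route as the paper: the same decomposition of $u$ into the stochastic convolution $v_u=\int_D G_D(\cdot,y)\sigma(y,u(y))W(dy)$ plus the solution $z_u$ of the deterministic two-wall obstacle problem, the same non-expansion bound $\|z_u-z_{\hat u}\|_{\infty}\leq\|v_u-v_{\hat u}\|_{\infty}$ from Lemma 3.4, and the same Burkholder--Kolmogorov--Green-function bookkeeping producing exactly the stated contraction threshold. The only difference is packaging: the paper runs the Picard iteration $u_n=z_n+v_n$ explicitly (starting from $\sigma(\cdot;0)$) and checks the Cauchy property and uniqueness by hand, whereas you invoke the Banach fixed-point theorem in the space $\mathcal{H}_p$ --- the same argument in abstract form.
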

\begin{proof}\\
Existence:\\
We will use successive iteration:\\
Let
\begin{equation}
v_{1}(x)=\int_{D}G_{D}(x,y)\sigma(y;0)W(dy).
\end{equation}

As in \cite{BP}, it is seen that $v_{1}(x)$ is the solution of the following SPDE:
\begin{equation}
\left\{
   \begin{aligned}
-\Delta v_{1}(x)=\sigma(x;0)\dot{W}(x)\\
v_{1}|_{\partial D}=0\\
\end{aligned}
  \right.
\end{equation}
and $v_{1}(x)\in C(\bar{D})$.\\
Denote by $(z_{1},\eta_{1},\xi_{1})$ be the unique random solution of the following reflected PDE:
\begin{equation}
\left\{
   \begin{aligned}
-\Delta z_{1}(x)+f(z_{1}+v_{1})=\eta_{1}(x)-\xi_{1}(x)\\
z_{1}|_{\partial D}=0\\
h^{1}(x) \leq z_{1}(x)+v_{1}(x) \leq h^2(x)\\
\int_{D}(z_{1}(x)+v_{1}(x)-h^1(x))\eta_{1}(dx)=\int_{D}(h^2(x)-z_{1}(x)-v_{1}(x))\xi_{1}(dx)=0.
\end{aligned}
  \right.
\end{equation}
Set $u_{1}=z_{1}+v_{1}$. Then we can easily verify that $(u_{1},\eta_{1},\xi_{1})$ is the unique solution of the following reflected SPDE:
\begin{equation}
\left\{
   \begin{aligned}
   -\Delta u_{1}(x)+f(x;u_{1})=\sigma(x;0)\dot{W}(x)+\eta_{1}(x)-\xi_{1}(x)\\
   u_{1}|_{\partial D}=0\\
   h^{1}(x) \leq u_1(x) \leq h^2(x)\\
\int_{D}(u_{1}(x)-h^1(x))\eta_{1}(dx)=\int_{D}(h^2(x)-u_{1}(x))\xi_{1}(dx)=0.
   \end{aligned}
  \right.
\end{equation}
Iterating this procedure, suppose $u_{n-1}$ has been defined. Let
\begin{equation}
v_{n}(x)=\int_{D}G_{D}(x,y)\sigma(y;u_{n-1})W(dy),
\end{equation}
and $(z_{n},\eta_{n},\xi_{n})$ be the unique random solution of the following reflected PDE:
\begin{equation}
\left\{
   \begin{aligned}
-\Delta z_{n}(x)+f(z_{n}+v_{n})=\eta_{n}(x)-\xi_{n}(x)\\
z_{n}|_{\partial D}=0\\
h^{1}(x)\leq z_{n}(x)+v_{n}(x)\leq h^{2}(x)\\
\int_{D}(z_{n}(x)+v_{n}(x)-h^1(x))\eta_{n}(dx)=\int_{D}(h^2(x)-z_{n}(x)-v_{n}(x))\xi_{n}(dx)=0.
\end{aligned}
  \right.
\end{equation}
Set $u_{n}=z_{n}+v_{n}$. Then $(u_{n},\eta_{n},\xi_{n})$ is the unique solution of the following  reflected SPDE:
\begin{equation}
\left\{
   \begin{aligned}
   -\Delta u_{n}(x)+f(x;u_{n}(x))=\sigma(x;u_{n-1}(x))\dot{W}(x)+\eta_{n}-\xi_{n}\\
   u_{n}|_{\partial D}=0\\
   h^{1}(x)\leq u_{n}(x)\leq h^{2}(x)\\
   \int_{D}(u_{n}(x)-h^1(x))\eta_{n}(dx)=\int_{D}(h^2(x)-u_{n}(x))\xi_{n}(dx)=0.
\end{aligned}
  \right.
\end{equation}
From the proof of Lemma 3.4 (also Lemma 3.1 in \cite{NT}), we have
\begin{equation}
||z_{n}-z_{n-1}||_{\infty}\leq ||v_{n}-v_{n-1}||_{\infty}, \label{z iteration}
\end{equation}
hence
\begin{equation}
||u_{n}-u_{n-1}||_{\infty}
   \leq 2||v_{n}-v_{n-1}||_{\infty}.           \label{u's difference}
\end{equation}
Namely,
\begin{eqnarray}\label{003}
&&(||u_{n}-u_{n-1}||_{\infty})\nonumber\\
   &&\leq 2\sup_{x\in D}|\int_{D}G_{D}(x,y)(\sigma(y;u_{n-1})-\sigma(y;u_{n-2}))W(dy)|.
\end{eqnarray}
Set
\begin{eqnarray*}
I(x)=\int_{D}G_{D}(x,y)(\sigma(y;u_{n-1})-\sigma(y;u_{n-2}))W(dy).
\end{eqnarray*}
Then $\forall p\geq 1$,
\begin{eqnarray*}
&&  E[|I(x)-I(y)|^{p}]\\
&&=  E|\int_{D}(G_{D}(x,z)-G_{D}(y,z))(\sigma(z;u_{n-1})-\sigma(z;u_{n-2}))W(dz)|^{p}\\
&&\leq c_{p}E[\int_{D}|G_{D}(x,z)-G_{D}(y,z)|^{2}\cdot|\sigma(z;u_{n-1})-\sigma(z;u_{n-2})|^{2} dz]^{\frac{p}{2}}\\
&&\leq c_{p}E||\sigma(z;u_{n-1})-\sigma(z;u_{n-2})||_{\infty}^{p}
[\int_{D}|G_{D}(x,z)-G_{D}({y,z})|^2dz]^{\frac{p}{2}},
\end{eqnarray*}
where $c_p$ is a Burkholder constant only related to $p$.\\
Similarly as the proof of Theorem 3.3 in \cite{ST}, we have
\begin{eqnarray}
||G_{D}(x,z)-G_{D}(y,z)||_{L^{2}(D)}^{2}\leq B|x-y|^{2\lambda },
\end{eqnarray}
where $\lambda=1$ when $k=1$, $\lambda$ is arbitrarily close to $1$ when $k=2$, and
$\lambda$ is arbitrarily close to $\frac{1}{2}$ when $k=3$.
Then,
\begin{eqnarray}
  E|I(x)-I(y)|^{p}
   \leq
  E||\sigma(z;u_{n-1})-\sigma(z;u_{n-2})||_{\infty}^{p}c_{p}B|x-y|^{\lambda p}.
\end{eqnarray}
We next show that $u_n$ converges uniformly on  $D$.
 Let $K \subset D$ be any compact subset of $D$.  $\forall x,y \in K,$ from Kolmogrov lemma (Lemma 3.1 in \cite{DP} ), we deduce that for $\forall p> \frac{k}{\lambda}$,
\begin{eqnarray}
|I(x)-I(y)|^{p}\leq (N(w))^{p}|x-y|^{\lambda p-k}(log(\frac{\gamma}{|x-y|}))^{2},\\
E(N^{p})\leq a c_{p}B E||\sigma(z;u_{n-1})-\sigma(z;u_{n-2})||_{\infty}^{p},
\end{eqnarray}
where $a$ is a universal constant independent of $K$.
Choosing $y=x_{0}\in K$, we see that
\begin{eqnarray}
E[\sup_{x\in K}|I(x)|^{p}]
& \leq&2^{p-1} E[\sup_{x\in K}|I(x)-I(x_{0})|^{p}]+2^{p-1}E|I(x_{0})|^{p}\nonumber \\
&\leq&2^{p-1} E(N^{p})r_{D}^{\lambda p -k}+2^{p-1} E|I(x_{0})|^{p}\nonumber \\
&\leq& 2^{p-1}ac_{p}Br_D^{\lambda p -k}E||\sigma(z;u_{n-1})-\sigma(z;u_{n-2})||_{\infty}^{p}\nonumber\\
&&+2^{p-1}E|I(x_{0})|^{p},
\end{eqnarray}
where $r_D=sup_{x,y\in D}|x-y|$ is the diameter of $D$.
Furthermore,
\begin{eqnarray}
E|I(x_{0})|^{p}
&=& E|\int_{D}G_{D}(x_{0},y)(\sigma(y;u_{n-1})-\sigma(y;u_{n-2}))W(dy)|^{p}\nonumber\\
&\leq& c_{p}E[\int_{D}|G_{D}(x_{0},y)|^{2}\cdot|\sigma(y;u_{n-1})-\sigma(y;u_{n-2})|^{2} dy]^{\frac{p}{2}}\nonumber\\
&\leq& c_{p}(C_D)^{\frac{p}{2}}E||\sigma(z;u_{n-1})-\sigma(z;u_{n-2})||_{\infty}^{p},
\end{eqnarray}
where $C_D=\sup_x\int_{D}|G_{D}(x,y)|^{2}dy <\infty$.
So we have
\begin{eqnarray}\label{004}
&&E[\sup_{x\in K}|I(x)|^{p}]\nonumber\\
&\leq& 2^{p-1}ac_{p}Br_D^{\lambda p -k}E||\sigma(z;u_{n-1})-\sigma(z;u_{n-2})||_{\infty}^{p}\nonumber\\
&&+2^{p-1}c_{p}(C_D)^{\frac{p}{2}}E||\sigma(z;u_{n-1})-\sigma(z;u_{n-2})||_{\infty}^{p}.
\end{eqnarray}
Since the constants on the right side of  (\ref{004}) are independent of the compact subset $K$, by Fatou's Lemma we deduce that
\begin{eqnarray}\label{005}
&&E[\sup_{x\in D}|I(x)|^{p}]\nonumber\\
&\leq& 2^{p-1}ac_{p}B r_D^{\lambda p -k}E||\sigma(z;u_{n-1})-\sigma(z;u_{n-2})||_{\infty}^{p}\nonumber\\
&&+2^{p-1}c_{p}(C_D)^{\frac{p}{2}}E||\sigma(z;u_{n-1})-\sigma(z;u_{n-2})||_{\infty}^{p}.
\end{eqnarray}
Now it follows from (\ref{003}) and (\ref{005}) that
\begin{eqnarray} \nonumber
&&E(||u_{n}-u_{n-1}||_{\infty})^{p}\nonumber\\
&\leq& [2^{2p-1}ac_{p}B r_D^{\lambda p -k}+2^{2p-1}c_{p}(C_D)^{\frac{p}{2}}]\nonumber\\
&&\quad\quad \times E||\sigma(z;u_{n-1})-\sigma(z;u_{n-2})||_{\infty}^{p}\nonumber\\
&\leq&[2^{2p-1}ac_{p}B r_D^{\lambda p -k}+2^{2p-1}c_{p}(C_D)^{\frac{p}{2}}]C_{\sigma}^{p}\nonumber\\
&&\quad\quad \times E||u_{n-1}-u_{n-2}||_{\infty}^{p}\nonumber\\
&\leq& ...\nonumber\\
&\leq& \left ( [2^{2p-1}ac_{p}B r_D^{\lambda p -k}+2^{2p-1}c_{p}(C_D)^{\frac{p}{2}}]C_{\sigma}^{p}\right )^{n-1} E||u_{1}-u_{0}||_{\infty}^{p}       \label{iteration of u}
\end{eqnarray}

Since
  $$[2^{2p-1}ac_{p}Br_D^{\lambda p -k}+2^{2p-1}c_{p}(C_D)^{\frac{p}{2}}]C_{\sigma}^{p}<1,$$
we obtain from (\ref{iteration of u}) that  for any $m\geq n\geq 1$,
\begin{eqnarray}\nonumber
E(||u_{m}-u_{n}||_{\infty})^{p}
            \rightarrow 0,
\end{eqnarray}
as $n,m \rightarrow \infty$.\\
Hence, there exists a continuous random field $u(\cdot ) \in C(D)$, such that
\begin{eqnarray}
E((||u||_{\infty})^{p})< \infty,
\end{eqnarray}
and
\begin{eqnarray}
\lim_{n\rightarrow \infty}E(||u_{n}-u||_{\infty})^{p}=0.
\end{eqnarray}
Next we will show that $u$ is a solution of Eq(\ref{u equation}).\\
Set
\begin{eqnarray}
v(x)=\int_{D}G_{D}(x,y)\sigma(y;u)W(dy).
\end{eqnarray}
As the proof of (\ref{005}), we have
\begin{eqnarray}
\lim_{n\rightarrow \infty}E||v_{n}-v||_{\infty}^{p}=\lim_{n\rightarrow \infty}E||u_{n-1}-u||_{\infty}^{p}=0.
\end{eqnarray}
From the inequality (\ref{z iteration}), there exists a continuous random field $z(x)$ on D such that\\
 $\lim_{n\rightarrow \infty}E||z_{n}-z||_{\infty}^{p}=0$. So $z_{n}$ converges to $z$ uniformly on D. Similar to the proof of Theorem 3.1, we can show that $\eta(dx)=\lim_{n\rightarrow \infty} \eta_{n}(dx)$, $\xi(dx)=\lim_{n\rightarrow \infty} \xi_{n}(dx)$ exist almost surely and $(z,\eta,\xi)$ is the solution of equation (\ref{deterministic eq1}) with the above given $v$. Put $u(x)=z(x)+v(x)$. It is easy to verify $(u,\eta,\xi)$ is a solution to the SPDE(\ref{u equation}) with two reflecting walls.\\

Uniqueness:\\
Let $(u_{1},\eta_{1},\xi_{1})$ and $(u_{2},\eta_{2},\xi_{2})$ be two solutions of Eq(\ref{u equation}). Set
\begin{eqnarray}
v_{1}(x)=\int_{D} G_{D}(x,y)\sigma(y;u_{1})W(dy),\\
v_{2}(x)=\int_{D}G_{D}(x,y)\sigma(y;u_{2})W(dy),
\end{eqnarray}
and
 $z_{1}=u_{1}-v_{1}$ and $z_{2}=u_{2}-v_{2}$.
Then  $z_{1}$, $z_{2}$ are solutions of the following reflected random PDEs:
\begin{eqnarray}
\left\{
   \begin{aligned}
-\Delta z_{1}(x)+f(z_{1}+v_{1})=\eta_{1}(x)-\xi_{1}(x)\\
z_{1}|_{\partial D}=0\\
h^{1}(x)\leq z_{1}(x)+v_{1}(x)\leq h^{2}(x)\\
\int_{D}(z_{1}(x)+v_{1}(x)-h^1(x))\eta_{1}(dx)=\int_{D}(h^2(x)-z_{1}(x)-v_{1}(x))\xi_{1}(dx)=0,
\end{aligned}
  \right.
\end{eqnarray}
\begin{eqnarray}
\left\{
   \begin{aligned}
-\Delta z_{2}(x)+f(z_{2}+v_{2})=\eta_{2}(x)-\xi_{2}(x)\\
z_{2}|_{\partial D}=0\\
h^{1}(x)\leq z_{2}(x)+v_{2}(x)\leq h^{2}\\
\int_{D}(z_{2}(x)+v_{2}(x)-h^1(x))\eta_{2}(dx)=\int_{D}(h^2(x)-z_{2}(x)-v_{2}(x))\xi_{2}(dx)=0,
\end{aligned}
  \right.
\end{eqnarray}
Similar to the inequality (\ref{z iteration}), we have
\begin{eqnarray}
||z_{1}-z_{2}||_{\infty} \leq ||v_{1}-v_{2}||_{\infty}.
\end{eqnarray}
Hence,
\begin{eqnarray}\label{006}
||u_{1}-u_{2}||_{\infty}^{p} \nonumber
&\leq& 2^{p}||v_{1}-v_{2}||_{\infty}^{p}\\
&\leq& 2^{p}(\sup_{x\in D}|\int_{D}G_{D}(x,y)(\sigma(y;u_{1})-\sigma(y;u_{2}))W(dy)|^{p})
\end{eqnarray}
As the proof of  (\ref{iteration of u}), we deduce from (\ref{006}) that
\begin{eqnarray}
E||u_{1}-u_{2}||_{\infty}^{p}\nonumber
&\leq &[2^{2p-1}ac_{p}C(p)r_D^{\lambda p -k}+2^{2p-1}c_{p}(C_D)^{\frac{p}{2}}]C_{\sigma}^{p}\nonumber\\
&&\quad \times  E||u_{1}-u_{2}||_{\infty}^{p}.
\end{eqnarray}
As
$$[2^{2p-1}ac_{p}C(p)r_D^{\lambda p -k}+2^{2p-1}c_{p}(C_D)^{\frac{p}{2}}]C_{\sigma}^{p}<1,$$
it follows that
\begin{eqnarray}
u_{1}=u_{2} \ \ a.s.
\end{eqnarray}
On the other hand, for $\phi \in C_{0}^{\infty}(D)$,
\begin{eqnarray}\nonumber
&& - (u_{1}(x)-u_{2}(x),\Delta \phi(x))+(f(x,u_{1}(x))-f(x;u_{2}(x)),\phi(x))\\ \nonumber
&& =\int_{D}[\sigma(x;u_{1}(x))-\sigma(x;u_{2}(x))]\phi(x)W(dx)\\
&& \ \ +\int_{D}\phi(x)(\eta_{1}(dx)-\eta_{2}(dx))-\int_{D}\phi(x)(\xi_{1}(dx)-\xi_{2}(dx)). \label{two solutions' difference}
\end{eqnarray}
Therefore we have
\begin{eqnarray}
\int_{D}\phi(x)(\eta_{1}(dx)-\eta_{2}(dx))-\int_{D}\phi(x)(\xi_{1}(dx)-\xi_{2}(dx))=0.\label{eta and xi uniqueness}
\end{eqnarray}
Recall that
\begin{eqnarray*}
supp \eta_{1}, supp \eta_{2} \subset \{x\in D:u_{1}(x)=h^{1}(x)\}=:A\\
supp \xi_{1}, supp \xi_{2} \subset \{x\in D:u_{1}(x)=h^{2}(x)\}=:B.
\end{eqnarray*}
Because $A\bigcap B=\emptyset $, for any $\phi \in C_{0}^{\infty}(D)$ with $supp\phi \subset (supp \eta_{1}\bigcup supp \eta_{2})$, it holds that $supp \phi \bigcap supp \xi_{1}=\emptyset$ and $supp \phi \bigcap supp \xi_{2}=\emptyset$.
Applying equation(\ref{eta and xi uniqueness}) to such a function $\phi$, we deduce that $\eta_{1}=\eta_{2}$. Similarly, $\xi_{1}=\xi_{2}$. Then the uniqueness is proved. $\Box$
\end{proof}

\end{document}